\documentclass{article}

\usepackage{amsmath}
\usepackage{amssymb}
\usepackage{graphicx}
\usepackage{tikz}
\usetikzlibrary{arrows.meta,calc}
\usepackage{bm}
\usepackage{enumitem}
\usepackage{mathdots}
\usepackage{booktabs}
\usepackage{rotating}
\usepackage{listings}
\usepackage{hyperref}
\usepackage{xcolor}
\definecolor{Mycolor}{RGB}{10,74,38}
\hypersetup{
  colorlinks=true,
  linkcolor=Mycolor,
  citecolor=Mycolor,
  urlcolor=Mycolor
}

\usepackage[ruled,linesnumbered,longend]{algorithm2e}
\usepackage[a4paper,left=2.8cm,right=2.8cm,top=2.5cm,bottom=2.5cm]{geometry}
\usepackage{fancyhdr}
\usepackage[framemethod=tikz]{mdframed}
\newtheorem{theorem}{Theorem}[section]

\newtheorem{lemma}{Lemma}[section]
\newtheorem{definition}{Definition}[section]

\makeatletter 
\@addtoreset{equation}{section}
\makeatother  

\newtheorem{remark}{Remark}[section]

\newenvironment{proof}{{\noindent\it Proof.}\quad}{\hfill $\square$\\}

\newcommand{\Sd}{\mathbb S^d}
\newcommand{\R}{\mathbb R}
\newcommand{\dd}{\,\mathrm d}
\newcommand{\norm}[1]{\left\lVert #1\right\rVert}
\newcommand{\abs}[1]{\left\lvert #1\right\rvert}

\begin{document}

\title{Nested QMC designs on spheres}

\author{ Hao-Ning Wu\footnotemark[2] \qquad Xiaosheng Zhuang\footnotemark[3]
       }

\renewcommand{\thefootnote}{\fnsymbol{footnote}}
\footnotetext[2]{School of Mathematical Sciences, Xiamen University, Xiamen, Fujian, 361005, China (hnwu@xmu.edu.cn)}
\footnotetext[3]{Department of Mathematics, City University of Hong Kong, Hong Kong, China (xzhuang7@cityu.edu.hk)}

\maketitle

\begin{abstract}
Nested cubature rules, in which each refinement retains all previously used nodes and thus reuses earlier function evaluations, are natural in multilevel and adaptive integration. For every fixed $s>d/2$, we show that the equal-weight QMC integration rate on $\mathbb S^d$ is compatible with such nested point sets. In the subcritical range $d/2<s<d$, cumulative unions of geometrically growing QMC blocks yield nested QMC design sequences whose successive cardinality ratios converge to any prescribed $\rho>1$. At and above the critical index $s=d$, where the block-averaging estimate no longer yields the optimal rate, we prove an equal-weight completion theorem based on low-frequency discrepancy cancellation. A block-sensitive estimate sharpens the iteration and yields nested QMC designs for all $s\ge d$ with $N_{j+1}\lesssim(j+1)^{2s/d-1}N_j$. Every prescribed finite point set also admits an optimal-rate completion.
\end{abstract}

\noindent\textbf{Keywords.} QMC design, spherical design, nested cubature,
Sobolev space, equal weights, worst-case error.

\noindent\textbf{AMS subject classifications.} 65D32, 41A55, 42C10.

\section{Introduction}

Numerical integration on the unit sphere $\Sd\subset\R^{d+1}$ is a basic
ingredient in approximation theory and numerical analysis.  For $s>d/2$, the optimal worst-case error attainable by $N$-point
cubature rules on $H^s(\Sd)$ has order $N^{-s/d}$; see, e.g.,
\cite{BrauchartHesse2007,Hesse2006Lower}.  Classical constructions
attaining this rate typically rely on polynomial exactness.  Quasi-Monte Carlo (QMC) designs
were introduced in \cite{BrauchartSaffSloanWomersley2014} to retain the
optimal integration rate without imposing polynomial exactness.
In this paper, we investigate whether the same rate is
compatible with \emph{nested} point sets.

\subsection{QMC designs and spherical designs}\label{sec:qmc-background}

Let $\sigma_d$ be the normalized surface measure on $\Sd$. For \(f\in L^2(\mathbb S^d)\), we consider the spherical integral
\[\mathcal I(f):=\int_{\Sd}f\,\dd\sigma_d.\]  
For $\ell\geq0$, let
$\mathcal H_\ell(\Sd)$ be the space of spherical harmonics of degree $\ell$,
with dimension $Z(d,\ell)$, and choose a real orthonormal basis
$\{Y_{\ell,k}\}_{k=1}^{Z(d,\ell)}$.  The corresponding eigenvalue of
$-\Delta_{\Sd}$ is $\lambda_\ell=\ell(\ell+d-1)$.  With Fourier--Laplace coefficients
 $\widehat f_{\ell,k}:=\int_{\Sd}fY_{\ell,k}\,\dd\sigma_d$, the Sobolev space \(H^s(\Sd)\) for \(s\ge0\) is equipped with the
spectral norm
\begin{equation}\label{eq:sobolev-norm}
 \norm{f}_{H^s(\Sd)}^2
 :=\sum_{\ell=0}^\infty(1+\lambda_\ell)^s
 \sum_{k=1}^{Z(d,\ell)}|\widehat f_{\ell,k}|^2.
\end{equation}
When $s>d/2$, the
embedding $H^s(\Sd)\hookrightarrow C(\Sd)$ makes point evaluation continuous.
Thus, for a finite set $\boldsymbol{X}\subset\Sd$, the worst-case error
\begin{equation*}
 \mathcal E_s(\boldsymbol{X}):=
 \sup_{\norm{f}_{H^s(\Sd)}\leq1}
 \abs{\frac1{|\boldsymbol{X}|}\sum_{x\in \boldsymbol{X}}f(x)-\mathcal I(f)}
\end{equation*}
of an equal-weight cubature rule 
is well defined.

\begin{definition}[QMC design sequence \cite{BrauchartSaffSloanWomersley2014}]\label{def:qmc-design}
 Fix $s>d/2$, and let $\mathcal N\subset\mathbb N$ be infinite.  A family
 $\{\boldsymbol{X}_N\}_{N\in\mathcal N}$ of point sets in $\Sd$, with
 $|\boldsymbol{X}_N|=N$, is an \emph{equal-weight QMC design
 sequence for $H^s(\Sd)$} if there exists a constant $C_{s,d}>0$,
 independent of $N$, such that
 \begin{equation}\label{eq:qmc-rate}
  \sup_{\norm{f}_{H^s(\Sd)}\leq1}
  \abs{
   \frac1N\sum_{x\in \boldsymbol{X}_N}f(x)-\int_{\Sd}f\,\dd\sigma_d
  }
  \leq C_{s,d}N^{-s/d},
  \qquad N\in\mathcal N.
 \end{equation}
 Its \emph{QMC strength} is the supremum of the values of $s$ for which
 \eqref{eq:qmc-rate} holds.  A sequence of infinite strength is called
 \emph{generic}.
\end{definition}

This is the original equal-weight definition of QMC designs in
\cite{BrauchartSaffSloanWomersley2014}, with related discrepancy--energy
formulations studied in \cite{BilykDaiMatzke2018}.  All asymptotic statements in $N$ are
understood as $N\to\infty$ through $\mathcal N$.  Here ``sequence'' means a
family indexed by an arbitrary infinite set of cardinalities.
The principal examples of QMC designs are spherical $t$-designs \cite{DelsarteGoethalsSeidel1977}.  Let $\Pi_t(\Sd)$ be the space of spherical polynomials of degree at most $t$. The point set $\boldsymbol{X}_{N,t}\subset\Sd$ is said to be a spherical
$t$-design when
\begin{equation*}
 \frac1{|\boldsymbol{X}_{N,t}|}\sum_{x\in \boldsymbol{X}_{N,t}}p(x)
 =\int_{\Sd}p\,\dd\sigma_d
 \qquad \forall\, p\in\Pi_t(\Sd).
\end{equation*}
Every spherical \(t\)-design satisfies the lower bound
\(|\boldsymbol{X}_{N,t}|\gtrsim t^d\); see \cite{DelsarteGoethalsSeidel1977}.
Moreover, optimal-order spherical \(t\)-designs with
\(|\boldsymbol{X}_{N,t}|\asymp t^d\) exist, as established in \cite{BondarenkoRadchenkoViazovska2013}. Hence, for every \(s>d/2\), an optimal-order family of spherical designs satisfies
\begin{equation}\label{eq:designs-are-qmc}
 \sup_{\norm{f}_{H^s(\Sd)}\leq1}
 \abs{
  \frac1{|\boldsymbol{X}_{N,t}|}\sum_{x\in \boldsymbol{X}_{N,t}}f(x)
  -\int_{\Sd}f\,\dd\sigma_d
 }
 \lesssim t^{-s}\asymp |\boldsymbol{X}_{N,t}|^{-s/d},
\end{equation}
where the first estimate was proved on $\mathbb S^2$ in
\cite{HesseSloan2005WCE,HesseSloan2006ArbitraryOrder} and further in arbitrary
dimension in \cite{BrauchartHesse2007,HesseMhaskarSloan2007}.  Consequently, optimal-order spherical designs are generic equal-weight QMC designs.  Polynomial
exactness is therefore a sufficient mechanism for attaining the QMC rate \eqref{eq:qmc-rate},
but it is not part of Definition~\ref{def:qmc-design}.

\subsection{Why nesting?}\label{sec:nested-motivation}

In multilevel and adaptive computations, cubature rules are refined for a
fixed integrand.  Nested nodes preserve previously computed function values,
so each refinement requires evaluations only at the newly added nodes.  The
same reuse principle underlies
recent multiscale constructions based on nested sampling measures; see, for
example, \cite{WuZhuang2026NestedMZ}.  This motivates the condition
$\boldsymbol{X}_{N_j}\subset \boldsymbol{X}_{N_{j+1}}$,
where $N_j,\,N_{j+1}\in\mathcal{N}$ are adjacent indices in $\mathcal{N}$.

Spherical designs provide an exact counterpart of this nesting condition. Let $\boldsymbol{Y}_M$ be a spherical
$t_1$-design with $t_1<t$.  Zheng and Zhuang proved in \cite{ZhengZhuang2024Nested}
 that $\boldsymbol{Y}_M$ can be enlarged
by a set $\boldsymbol{Z}_N$ so that
$\boldsymbol{Y}_M\cup\boldsymbol{Z}_N$ is a spherical $t$-design; their
theorem in fact allows $\boldsymbol{Y}_M$ to be an arbitrary prescribed point
set.  If $M\asymp t_1^d$, their general bound for this two-level completion is $N+M=O(t^{2d+1})$.
Here, $t_1$ is the degree of the inherited design, whereas $t$ is the target
degree after completion.  They attained the optimal orders $t_1^d$ and $t^d$
when $t=mt_1$ for a fixed rational $m>1$, with constants depending on $m$,
and conjectured the same conclusion uniformly for all $t_1<t$, with constants
depending only on $d$.

This naturally leads us to ask whether nesting is compatible with the optimal Sobolev integration rate without imposing polynomial exactness. QMC designs provide the appropriate framework for this question, and we further investigate the attainable growth of the successive cardinalities.

\subsection{Equal-weight nested QMC designs}
\label{sec:nested-definition}

For QMC designs, there is no intrinsic degree parameter like $t$ in spherical $t$-designs.  Let
$\mathcal N=\{N_j:j\geq0\}\subset\mathbb N$ with
$N_0<N_1<\cdots$, and set
\begin{equation*}
 \mathcal Q(\boldsymbol{X}_{N_j};f):=
 \frac1{N_j}\sum_{x\in \boldsymbol{X}_{N_j}}f(x).
\end{equation*}

\begin{definition}[Equal-weight nested QMC design sequence]
\label{def:nested-qmc}
 Fix $s>d/2$.  A sequence $\{\boldsymbol{X}_{N_j}\}_{j\geq0}$ with
 $|\boldsymbol{X}_{N_j}|=N_j$ is an \emph{equal-weight nested QMC design sequence for
 $H^s(\Sd)$} if $\boldsymbol{X}_{N_j}\subset \boldsymbol{X}_{N_{j+1}}$
 and
 \begin{equation*}
  \sup_{\norm{f}_{H^s(\Sd)}\leq1}
  \abs{
   \mathcal Q(\boldsymbol{X}_{N_j};f)-\mathcal I(f)
  }
  \leq C_{s,d}N_j^{-s/d}.
 \end{equation*}
\end{definition}

As in the original QMC definition, no relation between successive cardinalities is imposed. The main conclusions take different forms below and at or above the critical
index $s=d$.  They are collected in the following theorem.

\begin{theorem}[Nested QMC designs]
\label{thm:main-results}
 Let $d\geq2$.
 \begin{enumerate}
  \item[(i)] If $d/2<s<d$, then for every $\rho>1$ there is an equal-weight
  nested QMC design sequence $\{\boldsymbol{X}_{N_j}\}_{j\geq0}$ for $H^s(\Sd)$ such
  that
  \begin{equation*}
   \mathcal E_s(\boldsymbol{X}_{N_j})\lesssim N_j^{-s/d},
  \end{equation*}
  where ${N_{j+1}}/{N_j}\longrightarrow\rho\text{ as }j\rightarrow\infty$.
  \item[(ii)] If $s\geq d$, there is an equal-weight nested QMC design sequence for
  $H^s(\Sd)$ such that
  \begin{equation*}
   \mathcal E_s(\boldsymbol{X}_{N_j})\lesssim N_j^{-s/d},
  \end{equation*}
  where $N_{j+1}\lesssim (j+1)^{2s/d-1}N_j$. Moreover, every prescribed $N$-point set admits an $N'$-point completion satisfying
  \begin{equation*}
      \mathcal E_s(\boldsymbol{X}_{N'})\lesssim (N')^{-s/d},
  \end{equation*}
  where $N'\lesssim N^{1+2s/d}$.
   \end{enumerate}
\end{theorem}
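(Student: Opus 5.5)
The plan is to work throughout with the reproducing kernel representation of the worst-case error. For $s>d/2$,
\[
\mathcal E_s(\boldsymbol X)^2=\sum_{\ell\ge1}(1+\lambda_\ell)^{-s}\sum_{k}\Big|\frac1{|\boldsymbol X|}\sum_{x\in\boldsymbol X}Y_{\ell,k}(x)\Big|^2 .
\]
Write $D_{\ell,k}(\boldsymbol X):=\sum_{x\in\boldsymbol X}Y_{\ell,k}(x)$ for the unnormalized discrepancy vector. For a disjoint union $\boldsymbol X=\bigcup_i\boldsymbol B_i$ we then have $D(\boldsymbol X)=\sum_i D(\boldsymbol B_i)$, and $|\boldsymbol X|\,\mathcal E_s(\boldsymbol X)=\|D(\boldsymbol X)\|_{s}$ in the weighted $\ell^2$ norm. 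For the blocks we will use optimal-order spherical designs, which exist by \cite{BondarenkoRadchenkoViazovska2013}. Each block $\boldsymbol B_i$ of size $M_i$ satisfies $\|D(\boldsymbol B_i)\|_s\lesssim M_i^{1-s/d}$ by \eqref{eq:designs-are-qmc}. Since every design can be rotated, we may also take the blocks pairwise disjoint.

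\emph{Part (i).} Let $M_i\asymp\rho^i$ be block sizes chosen so that $N_j=\sum_{i\le j}M_i$ has $N_{j+1}/N_j\to\rho$, and set $\boldsymbol X_{N_j}=\bigcup_{i\le j}\boldsymbol B_i$. The triangle inequality gives
\[
N_j\,\mathcal E_s(\boldsymbol X_{N_j})\le\sum_{i\le j}M_i^{1-s/d}\lesssim \rho^{j(1-s/d)}\asymp N_j^{1-s/d},
\]
because $1-s/d>0$ makes the geometric sum dominated by its last term. Dividing by $N_j$ yields $\mathcal E_s\lesssim N_j^{-s/d}$. This argument breaks down exactly at $s\ge d$: there the sum is dominated by the early blocks, and the resulting bound $\asymp 1$ or $j$ only gives an error of order $N_j^{-1}$, not $N_j^{-s/d}$.

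\emph{Part (ii), completion.} Given an arbitrary set $\boldsymbol Y$ of size $N$, the plan is to cancel its low-frequency discrepancy by adding equal-weight points. Choose a degree $t$. The vector $(D_{\ell,k}(\boldsymbol Y))_{1\le\ell\le t}$ has norm at most about $N\cdot t^{d/2}$, since $|Y_{\ell,k}|$ is controlled by the addition theorem. We look for a set $\boldsymbol Z$ of $M$ points with $D_{\ell,k}(\boldsymbol Z)=-D_{\ell,k}(\boldsymbol Y)$ for all $1\le\ell\le t$, so that $\boldsymbol Y\cup\boldsymbol Z$ is a $t$-design up to weights, and with $D(\boldsymbol Z)$ controlled at high frequencies. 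For existence, I would follow the Bondarenko--Radchenko--Viazovska topological-degree argument, or the Zheng--Zhuang nested-design completion \cite{ZhengZhuang2024Nested}. Concretely, start from a $t$-design-like base configuration of $M$ points and move the points by a gradient-type map, using a degree argument to solve $D_{\le t}(\boldsymbol Z)=v$ for a prescribed target $v$ with $\|v\|\lesssim M\cdot(\text{small})$. This forces $M$ large relative to $N$.

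\emph{Part (ii), the bound and the iteration.} The high frequencies $\ell>t$ of both $\boldsymbol Y$ and $\boldsymbol Z$ contribute at most $(N+M)\,t^{-s}$ to $\|D\|_s$, using Marcinkiewicz–Zygmund-type separation for $\boldsymbol Z$ and the trivial kernel-tail bound for $\boldsymbol Y$; the latter is roughly $N t^{d/2-s}$. Requiring this to be at most $(N+M)^{1-s/d}$ with $N+M\asymp t^d$ then gives a balance condition. The cruder requirement that $Nt^{d/2-s}\lesssim t^{d-s}$ would give $N'\asymp t^d$ with $t\gtrsim N^{2/d}$, i.e.\ $N'\lesssim N^{2}$; I expect it to be the solvability of the moment problem that costs more, yielding the stated $N'\lesssim N^{1+2s/d}$. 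For the nested sequence, I will not complete from scratch at each step. Instead I iterate a block-sensitive version in which the existing set $\boldsymbol X_{N_j}$ already has small discrepancy $N_j^{1-s/d}$ rather than $N$. The target vector is then smaller, and redoing the balance gives $N_{j+1}\lesssim (j+1)^{2s/d-1}N_j$, with the factor $j$ coming from the accumulated constant in the discrepancy over $j$ levels. The main obstacle is the completion step: proving that an equal-weight $M$-point set can realize a prescribed low-frequency moment vector, with quantitative dependence of $M$ on the size of that vector, while its high-frequency error stays at the optimal order.
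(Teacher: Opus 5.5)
Your part (i) is correct and is essentially the paper's argument. Part (ii) has a genuine gap. The completion step you call the main obstacle is the entire content of the result, and the framework you set up for it cannot work as stated.

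\textbf{The completion framework.} You cancel all moments of degree $1\le\ell\le t$ exactly, take $N+M\asymp t^d$, and bound the high frequencies of $\boldsymbol Z$ by $(N+M)t^{-s}$. For an arbitrary $\boldsymbol Y$ this is impossible. Suppose its $N$ points lie in a cap of radius $c/t$. Test the required identity $\sum_{z\in\boldsymbol Z}p(z)=(N+M)\int_{\Sd}p\,\dd\sigma_d-\sum_{y\in\boldsymbol Y}p(y)$ against a nonnegative needle $p\in\Pi_t(\Sd)$ with $p\geq1/2$ on the cap and $\int_{\Sd}p\,\dd\sigma_d\asymp t^{-d}$. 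This forces $N+M\gtrsim Nt^d$. So the new block has far more than $t^d$ points, and its own error is optimal only if it is accurate up to degree $\asymp M^{1/d}\gg t$. The cancellation scale and the accuracy degree of the new block must therefore be decoupled.

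Citing Zheng--Zhuang does not close this: their general completion has $O(t^{2d+1})$ points, which gives only the rate $P^{-s/(2d+1)}$. A BRV-type degree argument for a prescribed moment vector needs the target to be a positive density, which you state only as an unquantified smallness of $\norm{v}$.

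The missing idea is to recast the moment problem as equal-weight cubature for a positive density. Smooth-filter the inherited discrepancy as $\mathcal D_{\boldsymbol X_N}=g_L\sigma_d+(\text{tail})$, with $g_L$ a mean-zero polynomial of degree $\le2L$. Set $w_L=1-g_L/m$ with $m\geq2\norm{g_L}_{L^\infty(\Sd)}$. Take an $m$-point strict Chebyshev-type rule of degree $\asymp m^{1/d}$ for the doubling weight $w_L$ (Dai--Feng). A Marcinkiewicz--Zygmund inequality then gives $\norm{\mathcal D_{\boldsymbol B_m}+g_L\sigma_d}_{H^{-s}(\Sd)}\lesssim m^{1-s/d}$.

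The cost is thus governed by $\norm{g_L}_{L^\infty(\Sd)}$. For singleton blocks, $\norm{g_L}_{L^\infty(\Sd)}\lesssim NL^d$ and the tail is $\lesssim NL^{d/2-s}$ (your bound). Balancing $NL^{d/2-s}\lesssim m^{1-s/d}$ with $m\asymp NL^d$ gives exactly $L^d\asymp N^{2s/d}$ and $m\asymp N^{1+2s/d}$. Your own balance ($N'\asymp t^d$, $N'\lesssim N^2$) violates this positivity constraint, and you only ``expect'' the stated exponent.

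\textbf{The iteration.} Your iteration mechanism is also not the right one. Knowing only $\norm{\mathcal D_{\boldsymbol X_{N_j}}}_{H^{-s}(\Sd)}\lesssim N_j^{1-s/d}$ does not control the inherited high-frequency tail at the level $m^{1-s/d}$ needed after adding $m\gg N_j$ points. For $s>d$ that level is much smaller than $N_j^{1-s/d}$, so the size of the low-frequency target is not the bottleneck. Nor can the factor $(j+1)$ come from constants accumulating over levels, since Definition~\ref{def:nested-qmc} requires one constant for all $j$.

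In the paper, $j+1$ is the number $K$ of blocks in a decomposition of $\boldsymbol X_{N_j}$ into subsets each $cN_j^{-1/d}$-separated. This structure survives every completion with a fixed constant, because each new Dai--Feng block is separated at its own scale. Separation, together with the localization of a smooth filtered kernel, gives $\norm{g_L}_{L^\infty(\Sd)}\lesssim KL^d$ instead of $NL^d$. A sharp projection onto $\Pi_t$, as in your exact cancellation, is not localized enough to sum over a separated block. It also gives a tail bound $HL^{d/2-s}$ with $H=\sum_\nu\abs{\boldsymbol A_\nu}^{1/2}$ and $H^2\lesssim N_j$ kept inductively. Choosing $L^d\asymp H^2K^{2s/d-2}$ and $m\asymp KL^d$ then yields $N_{j+1}\lesssim(j+1)^{2s/d-1}N_j$ with a uniform error constant.
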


Both parts of Theorem~\ref{thm:main-results} attain the optimal order
\(N_j^{-s/d}\), but by different constructions.
Part~\textup{(i)} uses cumulative unions of geometric QMC blocks. Part~\textup{(ii)} instead cancels the low-frequency discrepancy while
preserving the block structure.

\subsection{Notations}
For $x,y\in\Sd$, let $\operatorname{dist}(x,y):=\arccos(x\cdot y)$ be the geodesic distance and 
$B(x,\theta):=\{y\in\Sd:\operatorname{dist}(x,y)<\theta\}$ the open geodesic ball.
For a finite point set $\boldsymbol{A}\subset\Sd$, its separation is
\begin{equation*}
 q(\boldsymbol{A}):=
 \min_{{x,y\in\boldsymbol{A},\,x\ne y}}
 \operatorname{dist}(x,y),
\end{equation*}
with the convention $q(\boldsymbol{A})=\infty$ when
$|\boldsymbol{A}|=1$.  For $\delta>0$, we call $\boldsymbol{A}$
$\delta$-separated if $q(\boldsymbol{A})\geq\delta$.
An $N$-point set $\boldsymbol{X}_N$ is said to admit a
$c$-separated $K$-block decomposition if it can be written as a disjoint
union of nonempty sets,
\begin{equation}\label{eq:block-separation}
 \boldsymbol{X}_N=\mathop{\dot\bigcup}_{\nu=1}^K\boldsymbol{A}_\nu,
 \qquad
 q(\boldsymbol{A}_\nu)\geq cN^{-1/d},
 \quad 1\leq\nu\leq K.
\end{equation}
Every $N$-point set has such a decomposition with $K=N$, obtained by taking
singleton blocks; smaller values of $K$ measure how efficiently the set can
be partitioned into uniformly separated subsets.
We shall also use the unnormalized discrepancy measure
\begin{equation*}
 \mathcal D_{\boldsymbol{X}}
 :=\sum_{x\in\boldsymbol{X}}\delta_x-|\boldsymbol{X}|\sigma_d
\end{equation*}
of a finite point set $\boldsymbol{X}\subset\Sd$.  Here $H^{-r}(\Sd)$
denotes the Hilbert-space dual of $H^r(\Sd)$, equipped with the dual norm
$\norm{\mu}_{H^{-r}(\Sd)}
 :=\sup_{\norm{f}_{H^r(\Sd)}\leq1}
 \abs{\langle\mu,f\rangle}$.
For the spectral norm \eqref{eq:sobolev-norm}, this is equivalently
\begin{equation}\label{eq:negative-sobolev-norm}
 \norm{\mu}_{H^{-r}(\Sd)}^2
 :=\sum_{\ell=0}^\infty(1+\lambda_\ell)^{-r}
 \sum_{k=1}^{Z(d,\ell)}
 \abs{\langle\mu,Y_{\ell,k}\rangle}^2.
\end{equation}
For every $r>d/2$,
$\mathcal D_{\boldsymbol{X}}$ belongs to $H^{-r}(\Sd)$, and the error
functional of the equal-weight rule on $\boldsymbol{X}$ is represented by
$|\boldsymbol{X}|^{-1}\mathcal D_{\boldsymbol{X}}$.  In particular, $\mathcal E_r(\boldsymbol{X})
 =|\boldsymbol{X}|^{-1}
  \norm{\mathcal D_{\boldsymbol{X}}}_{H^{-r}(\Sd)}$.

Throughout, $A\lesssim B$ means that $A\leq CB$ with a constant $C$
independent of the asymptotic parameters, and $A\asymp B$ means that both
$A\lesssim B$ and $B\lesssim A$ hold.  Dependence on fixed parameters is
indicated by a subscript when it is relevant.

\section{The subcritical range $d/2<s<d$}
\label{sec:subcritical-range}

For a disjoint union, the integration-error functional is the
cardinality-weighted sum of the block error functionals.  When $d/2<s<d$,
an $m$-point QMC block contributes at most $m^{1-s/d}$ before normalization,
so a geometric sequence of such contributions is controlled by its final
term.  This yields the following result.

\begin{theorem}[Subcritical nested QMC designs]
\label{thm:subcritical-block}
 Given $d\geq2$, $d/2<s<d$, and $\rho>1$, there exist pairwise disjoint point
 sets $\boldsymbol{B}_j\subset\Sd$, with $|\boldsymbol{B}_j|=m_j:=\lceil\rho^j\rceil$,
 such that, for
 \begin{equation*}
  \boldsymbol{X}_{N_J}:=\bigcup_{j=0}^J\boldsymbol{B}_j,
  \qquad
  N_J:=|\boldsymbol{X}_{N_J}|=\sum_{j=0}^Jm_j,
 \end{equation*}
 the sequence $\{\boldsymbol{X}_{N_J}\}_{J\geq0}$ is an equal-weight nested
 QMC design sequence for $H^s(\Sd)$ in the sense of
 \begin{equation}\label{eq:subcritical-block-bound}
  \sup_{\norm{f}_{H^s(\Sd)}\leq1}
  \abs{
   \frac1{N_J}\sum_{x\in \boldsymbol{X}_{N_J}}f(x)
   -\int_{\Sd}f\,\dd\sigma_d
  }
  \lesssim N_J^{-s/d}.
 \end{equation}
 Moreover, $ {N_{J+1}}/{N_J}\longrightarrow\rho$ as $J\rightarrow \infty$.
\end{theorem}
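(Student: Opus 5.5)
The plan is to build each block $\boldsymbol{B}_j$ as an $m_j$-point QMC design for $H^s(\Sd)$ and then add up the block discrepancies.

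\textbf{Step 1: the blocks.} For each $m\in\mathbb N$ there is an $m$-point set $\boldsymbol{Y}$ with
\[
\mathcal E_s(\boldsymbol{Y})\le C m^{-s/d},
\]
with $C$ independent of $m$. One way to get this for every $m$ is from the energy/Riesz-type constructions of \cite{BrauchartSaffSloanWomersley2014}. Another is to use an optimal-order spherical design $\boldsymbol{X}_{M,t}$ with $M\asymp t^d$ from \cite{BondarenkoRadchenkoViazovska2013}, which exists for all $M\ge c t^d$, together with \eqref{eq:designs-are-qmc}. For the given $m$, choose the largest $t$ with $ct^d\le m$; then $t\asymp m^{1/d}$ and the error is $\lesssim m^{-s/d}$.

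\textbf{Step 2: disjointness.} The blocks must be pairwise disjoint. I will get this by rotation. If $\boldsymbol{Y}_j$ is the QMC set of size $m_j$, put $\boldsymbol{B}_j:=R_j\boldsymbol{Y}_j$ with $R_j\in SO(d+1)$. The norm \eqref{eq:sobolev-norm} is rotation invariant, so $\mathcal E_s(\boldsymbol{B}_j)=\mathcal E_s(\boldsymbol{Y}_j)$. At stage $j$, the rotations that make $R_j\boldsymbol{Y}_j$ meet the finite set $\bigcup_{i<j}\boldsymbol{B}_i$ form a finite union of proper closed subvarieties of $SO(d+1)$, hence a null set. So an admissible $R_j$ exists, and we choose the $R_j$ inductively.

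\textbf{Step 3: the key estimate.} Since the blocks are disjoint, $\mathcal D_{\boldsymbol{X}_{N_J}}=\sum_{j=0}^J\mathcal D_{\boldsymbol{B}_j}$. The triangle inequality in $H^{-s}$ and the representation $\mathcal E_s(\boldsymbol{X})=|\boldsymbol{X}|^{-1}\norm{\mathcal D_{\boldsymbol{X}}}_{H^{-s}(\Sd)}$ give
\[
N_J\,\mathcal E_s(\boldsymbol{X}_{N_J})\le\sum_{j=0}^J m_j\,\mathcal E_s(\boldsymbol{B}_j)\le C\sum_{j=0}^J m_j^{1-s/d}.
\]
This is where $s<d$ is used: the exponent $1-s/d$ is positive. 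Since $m_j\le 2\rho^j$, the sum is geometric,
\[
\sum_{j=0}^J m_j^{1-s/d}\lesssim\sum_{j=0}^J\rho^{j(1-s/d)}\lesssim_{\rho,s,d}\rho^{J(1-s/d)}.
\]
On the other hand $N_J\ge m_J\ge\rho^J$, so $\rho^{J(1-s/d)}\le N_J^{1-s/d}$. Dividing by $N_J$ gives \eqref{eq:subcritical-block-bound}, with a constant depending on $\rho,s,d$. Nestedness holds by construction.

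\textbf{Step 4: the ratio.} We have $N_J=\sum_{j\le J}\lceil\rho^j\rceil=\rho^{J+1}/(\rho-1)+O(J)$. Hence $N_{J+1}/N_J\to\rho$.

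\textbf{Main obstacle.} The only delicate point is Step 1: an $m$-point QMC block must exist for \emph{every} $m$, with a constant uniform in $m$. Under the cited existence results for spherical designs of every sufficiently large cardinality $M\ge ct^d$, this is immediate, and small $m$ only affects the constant.
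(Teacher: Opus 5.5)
Your proposal is correct and follows the paper's proof essentially step for step. The shared steps are: QMC blocks for all large $m$, with the finitely many small ones absorbed into the constant; rotations to make the blocks pairwise disjoint; the triangle inequality for $\mathcal D_{\boldsymbol{X}_{N_J}}=\sum_j\mathcal D_{\boldsymbol{B}_j}$ in $H^{-s}(\Sd)$; and the geometric sum with exponent $1-s/d>0$. In the rotation step you argue that $\{R:Rx=y\}$ is a lower-dimensional coset of a stabilizer, whereas the paper notes that $R\mapsto Rx$ pushes Haar measure forward to $\sigma_d$; both show this set is Haar-null.

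One small correction to Step 1: the distance-sum/energy maximizers of \cite{BrauchartSaffSloanWomersley2014} are QMC designs only for $s\in(d/2,d/2+1)$. That range misses $[d/2+1,d)$ when $d\geq3$. Rely instead on your spherical-design route (Bondarenko--Radchenko--Viazovska plus \eqref{eq:designs-are-qmc}), which covers every $s>d/2$.
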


\begin{proof}
 The existence theory for QMC designs in
 \cite{BrauchartSaffSloanWomersley2014} provides, for every sufficiently
 large $m$, an $m$-point set $\boldsymbol{B}\subset\Sd$ satisfying
 \begin{equation}\label{eq:subcritical-block-qmc}
  \sup_{\norm{f}_{H^s(\Sd)}\leq1}
  \abs{
   \frac1m\sum_{x\in\boldsymbol{B}}f(x)
   -\int_{\Sd}f\,\dd\sigma_d
  }
  \leq C_{s,d}m^{-s/d}.
 \end{equation}
 The constant is independent of $m$.  Since $m_j\to\infty$, this gives the
 required blocks for all but finitely many $j$.  Let $J_0$ be the set of
 remaining indices and choose an arbitrary $m_j$-point set
 $\boldsymbol{B}_j$ for each $j\in J_0$.  Since $s>d/2$, point evaluation is
 continuous and hence $\mathcal E_s(\boldsymbol{B}_j)<\infty$.  Replacing
 $C_{s,d}$ by $\max\left\{C_{s,d},\; 
  \max_{j\in J_0}m_j^{s/d}\mathcal E_s(\boldsymbol{B}_j)\right\}$
 makes \eqref{eq:subcritical-block-qmc} valid for every $j\geq0$.

 To obtain the nested sets by cumulative union while preserving
 $N_J=\sum_{j=0}^Jm_j$, we arrange the blocks to be pairwise disjoint, so
 that every block contributes $m_j$ new nodes.  Both $\sigma_d$ and the
 $H^s(\Sd)$-norm are rotation invariant, implying $\mathcal E_s(R\boldsymbol{B}_j)=\mathcal E_s(\boldsymbol{B}_j)$
 for $R\in SO(d+1)$, where $SO(d+1)$ denotes the rotation group of
 $\R^{d+1}$.
 Suppose that $\boldsymbol{B}_0,\ldots,\boldsymbol{B}_{j-1}$ have already
 been made pairwise disjoint, and let
  $\boldsymbol{F}_{j-1}:=
  \bigcup_{k=0}^{j-1}\boldsymbol{B}_k$.
 This is a finite set.  Let $\mu$ be the normalized Haar measure on $SO(d+1)$.
 For fixed $x\in\Sd$, the probability measure $A\mapsto\mu(\{R\in SO(d+1):Rx\in A\})$
 is rotation invariant and hence equals $\sigma_d$.  Therefore, for every
 $x,y\in\Sd$,
  $\mu(\{R\in SO(d+1):Rx=y\})=\sigma_d(\{y\})=0$.
 Hence, the exceptional set
 \begin{equation*}
  \{R\in SO(d+1):R\boldsymbol{B}_j\cap
       \boldsymbol{F}_{j-1}\ne\varnothing\}
  =
  \bigcup_{{x\in\boldsymbol{B}_j,\,y\in\boldsymbol{F}_{j-1}}}
  \{R\in SO(d+1):Rx=y\}
 \end{equation*}
 is a finite union of Haar-null sets and therefore cannot exhaust
 $SO(d+1)$.  We may thus rotate $\boldsymbol{B}_j$ away from all preceding
 blocks without changing its QMC error.  Starting with $\boldsymbol{B}_0$
 and repeating this choice at each stage $j$ yields a sequence of pairwise
 disjoint blocks with the original QMC bounds.

 For every $f\in H^s(\Sd)$, the equal-weight error of the union decomposes as
 \begin{equation*}
  \frac1{N_J}\sum_{x\in \boldsymbol{X}_{N_J}}f(x)
  -\int_{\Sd}f\,\dd\sigma_d
  =\sum_{j=0}^J\frac{m_j}{N_J}
   \left(
    \frac1{m_j}\sum_{x\in \boldsymbol{B}_j}f(x)
    -\int_{\Sd}f\,\dd\sigma_d
   \right),
 \end{equation*}
 implying
 \begin{equation}\label{equ:sec2estimate}
\begin{split}
  &\sup_{\norm{f}_{H^s(\Sd)}\leq1}
  \abs{
   \frac1{N_J}\sum_{x\in \boldsymbol{X}_{N_J}}f(x)
   -\int_{\Sd}f\,\dd\sigma_d
  }\\
  &\leq \sum_{j=0}^J\frac{m_j}{N_J}
    \sup_{\norm{f}_{H^s(\Sd)}\leq1}
    \left|
     \frac1{m_j}\sum_{x\in \boldsymbol{B}_j}f(x)
     -\int_{\Sd}f\,\dd\sigma_d
    \right|
  \lesssim \frac1{N_J}\sum_{j=0}^Jm_j^{1-s/d}.
  \end{split}
 \end{equation}
 Since $\lceil\rho^j\rceil=\rho^j+O(1)$,
 \begin{equation}\label{eq:subcritical-cardinality}
  N_J=\frac{\rho^{J+1}-1}{\rho-1}+O(J+1)
  \asymp_\rho \rho^{J+1}.
 \end{equation}
 In particular, $N_J\asymp_\rho m_J$.
 Let $\alpha:=1-s/d>0$.  Since $m_j\asymp\rho^j$, the geometric-series
 estimate gives
 \begin{equation*}
  \sum_{j=0}^Jm_j^\alpha
  \lesssim_\alpha \sum_{j=0}^J\rho^{\alpha j}
  =\rho^{\alpha J}\sum_{k=0}^J\rho^{-\alpha k}
  \leq\frac{\rho^{\alpha J}}{1-\rho^{-\alpha}}
  \lesssim_\alpha
  \frac{m_J^\alpha}{1-\rho^{-\alpha}}
  \asymp_{\rho,\alpha}N_J^\alpha.
 \end{equation*}
 Dividing by $N_J$ proves \eqref{eq:subcritical-block-bound}.  Finally,
 \eqref{eq:subcritical-cardinality} gives
 $N_{J+1}/N_J\to\rho$.
\end{proof}

\section{The critical and supercritical ranges $s\geq d$}
\label{sec:critical-supercritical-range}

The cumulative-block argument of Section~\ref{sec:subcritical-range} has a
threshold at $s=d$.  Indeed, for geometrically increasing QMC blocks with
$m_j\asymp\rho^j$ points, the estimate \eqref{equ:sec2estimate} gives
\begin{equation*}
 \mathcal E_s(\boldsymbol{X}_{N_J})
 \lesssim \frac1{N_J}\sum_{j=0}^Jm_j^{1-s/d}.
\end{equation*}
If $s=d$, then $m_j^{1-s/d}=1$, so the sum equals $J+1\asymp\log N_J$.
If $s>d$, since $m_j\asymp\rho^j$,
\begin{equation*}
 \sum_{j=0}^Jm_j^{1-s/d}
 \lesssim \sum_{j=0}^J\rho^{-(s/d-1) j}
 \leq\frac1{1-\rho^{-(s/d-1)}},
\end{equation*}
uniformly in $J$.  Dividing by $N_J$ and denoting by $N=N_J$, we obtain
\begin{equation*}
 \mathcal E_s(\boldsymbol{X}_N)
 \lesssim
 \begin{cases}
  N^{-1}\log N,&s=d,\\
  N^{-1},&s>d.
 \end{cases}
\end{equation*}
At $s=d$, the target is $N^{-1}$, so the direct
argument loses only a logarithmic factor.  For $s>d$, the target is
$N^{-s/d}$, and the direct argument misses it by the
factor $N^{s/d-1}$.  This limitation of the block-averaging estimate comes from the earliest blocks:
their unnormalized discrepancy remains of constant order and is reduced
only by the final normalization.  We therefore replace passive block
accumulation by a completion step that compensates the low-frequency
discrepancy of the inherited nodes, leading to another approach establishing the desired error rate of nested QMC designs.

\subsection{Completion controlled by block complexity}
\label{sec:block-completion}

The size of the correcting block is controlled by the separated-block
structure of the prescribed set.  We begin with the equal-weight cubature result
used in its construction.

\begin{lemma}[Equal-weight cubature for comparable measures]
\label{lem:comparable-measure-block}
 Let $w$ be measurable on $\Sd$ and satisfy
 \begin{equation}\label{eq:comparable-density}
  \int_{\Sd}w\,\dd\sigma_d=1,
  \qquad \frac12\leq w\leq\frac32.
 \end{equation}
 For every sufficiently large integer $m$, there is an $m$-point set
 $\boldsymbol{B}_m\subset\Sd$ such that, for every compact interval
\(
[r_0,S]\subset (d/2,\infty),
\)
the estimate
 \begin{equation}\label{eq:comparable-measure-qmc}
  \sup_{\norm{f}_{H^r(\Sd)}\leq1}
  \abs{
   \frac1m\sum_{x\in \boldsymbol{B}_m}f(x)
   -\int_{\Sd}fw\,\dd\sigma_d
  }
  \leq C_{r_0,S,d}m^{-r/d}
 \end{equation}
 holds simultaneously for all \(r\in[r_0,S]\).
 The points may be chosen with separation at least $c_dm^{-1/d}$.  The
 constants are independent of $w$ and $m$. 
\end{lemma}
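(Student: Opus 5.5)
We construct $\boldsymbol{B}_m$ by a partition-and-pick scheme adapted to $w\,\dd\sigma_d$, in the spirit of the Bondarenko--Radchenko--Viazovska argument and of the existence theory of \cite{BrauchartSaffSloanWomersley2014}. The plan has three stages: build a regular partition of the sphere with cells of equal $w\sigma_d$-mass, place one point in each cell, and prove a local error bound that holds for all $r$ in the interval at once.

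\emph{Step 1: equal-mass partition.} For large $m$, we partition $\Sd$ into $m$ cells $R_1,\dots,R_m$. Each cell will satisfy $\int_{R_i}w\,\dd\sigma_d=1/m$, will have diameter at most $C_dm^{-1/d}$, and will contain a ball of radius $c_dm^{-1/d}$. We obtain this by starting from a standard area-regular partition of $\Sd$ into $\asymp m$ cells of diameter $\asymp m^{-1/d}$. We then rebalance mass along a spanning-tree or flow argument, as in the construction of equal-area partitions for general measures. The bound $\frac12\le w\le\frac32$ makes $w\sigma_d$ comparable to $\sigma_d$ with absolute constants, so all geometric constants depend only on $d$.

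\emph{Step 2: choosing the points.} In each cell we choose a point $x_i$ in the inner ball of $R_i$. This gives separation $\ge c_dm^{-1/d}$ automatically. The resulting error functional splits as a sum of local pieces:
\[
\frac1m\sum_i f(x_i)-\int fw\,\dd\sigma_d=\sum_i\int_{R_i}\bigl(f(x_i)-f(y)\bigr)w(y)\,\dd\sigma_d(y).
\]

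\emph{Step 3: the error bound.} We handle small and large $r$ separately.

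For $r$ in the range $d/2<r\le r_1$ with $r_1<d/2+1$, we estimate the local differences using H\"older continuity of $H^r$ functions. Applied naively this only gives the rate $m^{-(r-d/2)/d}$, which is not optimal. To get the optimal rate we use the standard reproducing-kernel or probabilistic approach: choose $x_i$ at random in $R_i$ with density proportional to $w$. The expected squared error equals
\[
\sum_i \mathrm{Var}_i\bigl(K_r(x_i,\cdot)\bigr),
\]
where $K_r$ is the reproducing kernel of $H^r$. Computing this via the local smoothness of $K_r$, as in the jittered-sampling bound of \cite{BrauchartSaffSloanWomersley2014}, gives the order $m^{-2r/d}$ for $r<d/2+1$. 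The constant is uniform for $r$ in compact subsets of $(d/2,d/2+1)$.

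For larger $r$, first-moment cancellation inside each cell cannot yield the rate $m^{-r/d}$. We therefore expect to need a polynomial-exactness mechanism. The plan is to use the Bondarenko--Radchenko--Viazovska degree-theoretic argument for the measure $w\sigma_d$. For $t\asymp m^{1/d}$ with a small constant, this produces points $x_i\in R_i$, at positions near the cell centres, that integrate $\Pi_t$ exactly against $w\sigma_d$. Their argument uses only the partition geometry and the Marcinkiewicz--Zygmund property, and both hold uniformly under \eqref{eq:comparable-density}. Exactness on $\Pi_t$, together with separation and the Hesse--Sloan/Brauchart--Hesse argument via a filtered polynomial approximation, then gives error $\lesssim t^{-r}\asymp m^{-r/d}$. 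This holds for every $r>d/2$ simultaneously, because the point set does not depend on $r$. The constant is uniform for $r\in[r_0,S]$ by tracking the $r$-dependence in the approximation estimates.

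\emph{Main obstacle.} The hardest step is adapting the BRV topological argument to the weighted measure with constants independent of $w$. It also requires verifying the well-separated variant, namely that the points stay in the inner balls of the cells. For this we would follow the well-separated spherical designs of Bondarenko--Radchenko--Viazovska, replacing $\sigma_d$ by $w\sigma_d$ in the gradient-flow map. This approach makes Step~3 via exactness alone sufficient, so the random construction is not needed.
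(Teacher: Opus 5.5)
Your final route is essentially the paper's proof. You take equal-weight nodes exact on $\Pi_t$ for $w\,\dd\sigma_d$, with $t\asymp m^{1/d}$ and separation $\gtrsim m^{-1/d}$, and then run an exactness-plus-separation band argument (in the paper, a Marcinkiewicz--Zygmund sampling bound on the separated nodes) that gives $m^{-r/d}$ uniformly on $[r_0,S]$ because the node set does not depend on $r$. The step you flag as the main obstacle, a weighted well-separated Bondarenko--Radchenko--Viazovska argument with constants uniform in $w$, is exactly Dai and Feng's strict Chebyshev-type cubature theorem for doubling weights \cite{DaiFeng2019}, which the paper simply cites, so your partition and jittered-sampling detours can be dropped.
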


\begin{proof}
 By \eqref{eq:comparable-density} and the volume growth of spherical caps, we have $\int_{B(x,\theta)}w\,\dd\sigma_d\asymp\theta^d$
 for $x\in\Sd$ and $0<\theta\leq\pi$,
 with constants independent of $w$.  Thus $w$ is a doubling weight with a
 uniformly bounded doubling constant and $\|w\|_{L^\infty}\leq3/2$.
 The hypothesis on the number of nodes $m$ in Dai and Feng's theorem
 \cite[Theorem~1.1]{DaiFeng2019} is therefore satisfied whenever
 $m\geq C_dn^d$.  Choosing $n\asymp m^{1/d}$, the theorem then provides $m$ distinct nodes $\boldsymbol{B}_m$ forming a strict
 Chebyshev-type cubature formula of degree $n$:
 \begin{equation}\label{eq:comparable-measure-exactness}
  \frac1m\sum_{x\in \boldsymbol{B}_m}p(x)
  =\int_{\Sd}pw\,\dd\sigma_d,
  \qquad p\in\Pi_n(\Sd),
 \end{equation}
 with $q(\boldsymbol{B}_m)\geq c_dm^{-1/d}$.  We then define the error
 functional
 \begin{equation}\label{equ:errorfunctional2}
  \mathcal L_m(f):=
  \frac1m\sum_{x\in\boldsymbol{B}_m}f(x)
  -\int_{\Sd}fw\,\dd\sigma_d.
 \end{equation}
 Let $\{\eta_k\}_{k\geq0}$ be a smooth dyadic partition of unity on
 $[0,\infty)$, with $\operatorname{supp}\eta_0\subset[0,2]$ and
 $\operatorname{supp}\eta_k\subset[2^{k-1},2^{k+1}]$ for $k\geq1$.
 We define the $k$th spectral band by
 \begin{equation*}
  F_k(x):=
  \sum_{\ell=0}^\infty\eta_k(\ell)
  \sum_{j=1}^{Z(d,\ell)}
  \left(\int_{\Sd}f(y)Y_{\ell,j}(y)\,\dd\sigma_d(y)\right)
  Y_{\ell,j}(x).
 \end{equation*}
 In particular, $F_k$ has degree at most $2^{k+1}$, and
 $f=\sum_{k\geq0}F_k$ in $H^r(\Sd)$.  The finite overlap of the multipliers
 and the spectral definition of the Sobolev norm give
 \begin{equation}\label{eq:dyadic-sobolev-equivalence}
  \norm{f}_{H^r(\Sd)}^2
  \asymp
  \sum_{k\geq0}2^{2kr}\norm{F_k}_{L^2(\Sd)}^2.
 \end{equation}
 Choose $k_0$ so that $2^{k_0}\leq n<2^{k_0+1}$.  For $k<k_0$, the degree
 of $F_k$ is at most $2^{k+1}\leq 2^{k_0}\leq n$, so the exactness in
 \eqref{eq:comparable-measure-exactness}
 gives $\mathcal L_m(F_k)=0$ for $k<k_0$. 
 We then estimate the bands $k\geq k_0$.  Denoting $L_k:=2^{k+1}$, we have
 $F_k\in\Pi_{L_k}(\Sd)$.
 Since $q(\boldsymbol{B}_m)\geq c_dm^{-1/d}\asymp n^{-1}$ and
$L_k\geq2^{k_0+1}>n$, we have
 $q(\boldsymbol{B}_m)\geq c_*L_k^{-1}$ for a constant $c_*>0$ independent
 of $k$, $m$, and $w$.  Fix $0<a\leq c_*$, sufficiently small for the mesh
 condition below.  Then $\boldsymbol{B}_m$ is $aL_k^{-1}$-separated.
 Extend it to a maximal $aL_k^{-1}$-separated set
 $\boldsymbol{C}_k$ (so no further point can be added), and let
 $\{R_\xi:\xi\in\boldsymbol{C}_k\}$ be its spherical Voronoi decomposition.
 Separation and maximality give, respectively,
 \begin{equation*}
  B\left(\xi,\frac{a}{2L_k}\right)\subset R_\xi
  \subset B\left(\xi,\frac{a}{L_k}\right),
  \qquad
  \sigma_d(R_\xi)\asymp L_k^{-d}.
 \end{equation*}
 Our choice of $a$ ensures the mesh condition in the
 Marcinkiewicz--Zygmund inequality of Mhaskar, Narcowich, and Ward
 \cite[Theorem~3.1]{mhaskar2001spherical}, which implies
 \begin{equation*}
  \sum_{\xi\in\boldsymbol{C}_k}
  \sigma_d(R_\xi)|F_k(\xi)|^2
  \lesssim \norm{F_k}_{L^2(\Sd)}^2.
 \end{equation*}
 Since $\boldsymbol{B}_m\subset\boldsymbol{C}_k$ and
 $\sigma_d(R_\xi)\gtrsim L_k^{-d}$, it follows that
 \begin{equation*}
  \sum_{x\in\boldsymbol{B}_m}|F_k(x)|^2
  \lesssim L_k^d\norm{F_k}_{L^2(\Sd)}^2.
 \end{equation*}
 Finally, $L_k\asymp2^k$ and $m\asymp n^d$ lead to, for $k\geq k_0$,
 \begin{equation}\label{eq:high-frequency-sampling}
  \frac1m\sum_{x\in\boldsymbol{B}_m}|F_k(x)|^2
  \lesssim
  \left(\frac{2^k}{n}\right)^d
  \norm{F_k}_{L^2(\Sd)}^2.
 \end{equation}
 Since $2^k/n>1/2$ for $k\geq k_0$, Cauchy--Schwarz, the uniform bound on
 $w$, and \eqref{eq:high-frequency-sampling} give
 \begin{equation*}
  \abs{\mathcal L_m(F_k)}
  \leq
  \left(\frac1m\sum_{x\in\boldsymbol{B}_m}|F_k(x)|^2\right)^{1/2}
  +\norm{w}_{L^2(\Sd)}\norm{F_k}_{L^2(\Sd)} \lesssim
  \left(\frac{2^k}{n}\right)^{d/2}
  \norm{F_k}_{L^2(\Sd)}.
 \end{equation*}
 Summing the high-frequency bands and noting
 $2^{kd/2}=2^{-k(r-d/2)}2^{kr}$, we formally obtain, for $r>d/2$,
 \begin{equation*}
 \begin{split}
  \abs{\mathcal L_m(f)}
  &\lesssim
  n^{-d/2}\sum_{k\geq k_0}
  2^{kd/2}\norm{F_k}_{L^2(\Sd)}\\
  &\leq n^{-d/2}
  \left(\sum_{k\geq k_0}2^{-2k(r-d/2)}\right)^{1/2}
  \left(\sum_{k\geq k_0}2^{2kr}
  \norm{F_k}_{L^2(\Sd)}^2\right)^{1/2}.
  \end{split}
 \end{equation*}
 Set $\alpha:=r-d/2>0$.  Then
 \begin{equation*}
  \left(\sum_{k\geq k_0}2^{-2k(r-d/2)}\right)^{1/2}=\left(2^{-2k_0\alpha}
    \sum_{j=0}^\infty2^{-2j\alpha}\right)^{1/2} =\frac{2^{-k_0\alpha}}{(1-2^{-2\alpha})^{1/2}}
  \lesssim_{r,d} 2^{-k_0(r-d/2)}.
 \end{equation*}
 Moreover, if \(r\in[r_0,S]\subset(d/2,\infty)\), then $\bigl(1-2^{-2\alpha}\bigr)^{-1/2}
\le
\bigl(1-2^{-2(r_0-d/2)}\bigr)^{-1/2}$,
so all constants above may be chosen uniformly for
\(r\in[r_0,S]\).
Together with \eqref{eq:dyadic-sobolev-equivalence} and $2^{k_0}\asymp n$, we
 therefore obtain
 \begin{equation*}
  \abs{\mathcal L_m(f)}
  \lesssim n^{-d/2}2^{-k_0(r-d/2)}
  \norm{f}_{H^r(\Sd)}\asymp n^{-d/2}n^{-(r-d/2)}
  \norm{f}_{H^r(\Sd)}
  =n^{-r}\norm{f}_{H^r(\Sd)}.
 \end{equation*}
 Finally, $n\asymp m^{1/d}$ gives $n^{-r}\asymp m^{-r/d}$ and proves
 \eqref{eq:comparable-measure-qmc}.
\end{proof}

The next lemma spectrally decomposes the discrepancy into a bounded
low-frequency polynomial and a controlled high-frequency remainder, with
the former governed by the number $K$ of separated blocks rather than $N$.

\begin{lemma}[Filtered discrepancy estimate]
\label{lem:filtered-separated-blocks}
 Let
 $\boldsymbol{X}_N=\mathop{\dot\bigcup}_{\nu=1}^K\boldsymbol{A}_\nu$
 be a $c$-separated $K$-block decomposition and $H:=\sum_{\nu=1}^K|\boldsymbol{A}_\nu|^{1/2}$. Suppose that $L^d\geq N$. Then,
there is a spherical polynomial $g_L$ of degree at most $2L$, with mean
 zero, such that
 \begin{equation}\label{eq:filtered-block-supnorm}
  \norm{g_L}_{L^\infty(\Sd)}\lesssim KL^d.
 \end{equation}
Moreover, for every
 $r>d/2$,
 \begin{equation}\label{eq:filtered-block-tail}
  \norm{\mathcal D_{\boldsymbol{X}_N}-g_L\sigma_d}_{H^{-r}(\Sd)}
  \lesssim H L^{d/2-r}.
 \end{equation}
The constants depend on \(c\), \(r\), and \(d\), but not on
\(N\), \(K\), or \(L\), and may be chosen uniformly for \(r\)
in any compact subset of \((d/2,\infty)\).
\end{lemma}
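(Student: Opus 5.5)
The plan is to take $g_L$ to be a smoothly filtered version of the discrepancy. Fix a $C^\infty$ cutoff $\eta$ on $[0,\infty)$ with $\eta=1$ on $[0,1]$ and $\operatorname{supp}\eta\subset[0,2]$. Define the filtered kernel
\begin{equation*}
 \Phi_L(x,y):=\sum_{\ell=0}^\infty\eta(\ell/L)\sum_{k=1}^{Z(d,\ell)}Y_{\ell,k}(x)Y_{\ell,k}(y),
\end{equation*}
and set
\begin{equation*}
 g_L(y):=\sum_{x\in\boldsymbol{X}_N}\bigl(\Phi_L(x,y)-1\bigr).
\end{equation*}
This $g_L$ is a polynomial of degree at most $2L$. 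Its Fourier--Laplace coefficients are $\eta(\ell/L)\langle\mathcal D_{\boldsymbol{X}_N},Y_{\ell,k}\rangle$ for $\ell\ge1$, and its coefficient at $\ell=0$ vanishes, so $g_L$ has mean zero. Because the $\ell=0$ coefficient of $\mathcal D_{\boldsymbol{X}_N}$ is $N-N=0$, the coefficients of $\mathcal D_{\boldsymbol{X}_N}-g_L\sigma_d$ are exactly $(1-\eta(\ell/L))\langle\mathcal D_{\boldsymbol{X}_N},Y_{\ell,k}\rangle$. These are supported on $\ell\ge L$ and equal $(1-\eta(\ell/L))\sum_{x}Y_{\ell,k}(x)$.

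For the sup-norm bound \eqref{eq:filtered-block-supnorm}, I will use the standard localization of smoothly filtered zonal kernels:
\begin{equation*}
 |\Phi_L(x,y)|\lesssim_M L^d\bigl(1+L\operatorname{dist}(x,y)\bigr)^{-M}\quad\text{for every } M.
\end{equation*}
Fix a block $\boldsymbol{A}_\nu$. Its separation satisfies $q(\boldsymbol{A}_\nu)\ge cN^{-1/d}\ge cL^{-1}$, since $L^d\ge N$. Hence the annulus $\{x:\operatorname{dist}(x,y)\in[j/L,(j+1)/L)\}$ contains $O((1+j)^{d-1})$ points of $\boldsymbol{A}_\nu$, by a volume-packing argument. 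Choosing $M>d$, this gives
\begin{equation*}
 \sum_{x\in\boldsymbol{A}_\nu}|\Phi_L(x,y)|\lesssim L^d\sum_{j}(1+j)^{d-1-M}\lesssim L^d .
\end{equation*}
Summing over the $K$ blocks gives $KL^d$. The constant term contributes $N\le L^d\le KL^d$. Together these give \eqref{eq:filtered-block-supnorm}.

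For the tail bound \eqref{eq:filtered-block-tail}, I will use the triangle inequality over blocks in $H^{-r}$. This reduces the claim to showing, for a single $cL^{-1}$-separated set $\boldsymbol{A}$,
\begin{equation*}
 \Bigl\|\sum_{x\in\boldsymbol{A}}(\delta_x)^{\mathrm{high}}\Bigr\|_{H^{-r}}\lesssim |\boldsymbol{A}|^{1/2}L^{d/2-r},
\end{equation*}
where the superscript denotes the high-pass part with multiplier $1-\eta(\ell/L)$. Summing these block bounds then produces $H=\sum_\nu|\boldsymbol{A}_\nu|^{1/2}$. Split the frequencies $\ell\ge L$ into dyadic bands $2^{k}L\le\ell<2^{k+1}L$ for $k\ge0$. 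On each band, with $M_k:=2^{k+1}L$, I bound
\begin{equation*}
 \sum_{\ell\in\text{band}}\sum_{j}\Bigl|\sum_{x\in\boldsymbol{A}}Y_{\ell,j}(x)\Bigr|^2=\sup_{p}\Bigl|\sum_{x\in\boldsymbol{A}}p(x)\Bigr|^2,
\end{equation*}
where the supremum runs over $p$ in the span of that band with $\|p\|_{L^2}=1$. Cauchy--Schwarz gives the bound $|\boldsymbol{A}|\sum_{x\in\boldsymbol{A}}|p(x)|^2$. Exactly as in the proof of Lemma~\ref{lem:comparable-measure-block}, I extend $\boldsymbol{A}$ to a maximal $aM_k^{-1}$-separated set and apply the Marcinkiewicz--Zygmund inequality of \cite{mhaskar2001spherical}. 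This is legitimate because $q(\boldsymbol{A})\ge cL^{-1}\gtrsim M_k^{-1}$. It yields $\sum_{x\in\boldsymbol{A}}|p(x)|^2\lesssim M_k^d$. Weighting by $(1+\lambda_\ell)^{-r}\asymp(2^kL)^{-2r}$ and summing the geometric series in $k$ gives a total of $\lesssim|\boldsymbol{A}|L^{d-2r}$, because $d-2r<0$. Taking square roots finishes the single-block estimate.

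I expect the main technical point to be the uniformity of constants. The kernel localization and the annulus counting depend only on $c$, $d$ and $M$. The Marcinkiewicz--Zygmund constant depends only on $c$ and $d$. The geometric sums contribute $(1-2^{d-2r})^{-1/2}$, which stays bounded for $r$ in a compact subset of $(d/2,\infty)$, as in the proof of Lemma~\ref{lem:comparable-measure-block}. Hence no constant depends on $N$, $K$ or $L$. I would cite the kernel localization estimate, which needs $\eta$ smooth, from the standard needlet and filtered-kernel literature rather than reprove it.
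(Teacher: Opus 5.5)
Your proposal is correct. The construction of $g_L$, the mean-zero check, and the sup-norm bound are essentially the paper's argument. The only difference there is that the paper counts points in dyadic annuli rather than unit-width ones, which does not matter.

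You genuinely diverge on the tail estimate \eqref{eq:filtered-block-tail}. The paper writes the block quantity $\norm{\mathcal D_{\boldsymbol{A}_\nu}-g_{L,\nu}\sigma_d}_{H^{-r}(\Sd)}^2$ as the double sum $\sum_{x,y\in\boldsymbol{A}_\nu}\Psi_{L,r}(x,y)$ of a tail kernel. It then proves the pointwise bound $|\Psi_{L,r}(x,y)|\lesssim\sum_j R_j^{d-2r}(1+R_j\operatorname{dist}(x,y))^{-M}$, and finishes by reusing the annular packing count. Getting that pointwise bound is the technical part: the paper cuts the symbol $(1-h)^2(1+\lambda)^{-r}$ into dyadic pieces, rescales each piece to fit the shifted Narcowich--Petrushev--Ward localization theorem, and checks uniform derivative bounds.

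You instead treat each sharp dyadic band with three standard facts:
\begin{enumerate}
\item duality, since the band sum equals the squared norm of the functional $p\mapsto\sum_{x}p(x)$ on that band;
\item Cauchy--Schwarz;
\item the upper Marcinkiewicz--Zygmund sampling inequality for separated sets, which the paper already uses in Lemma~\ref{lem:comparable-measure-block}.
\end{enumerate}
This gives exactly the same bound $|\boldsymbol{A}|L^{d-2r}$.

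What your route buys:
\begin{itemize}
\item It needs no localization of the tail kernel and no symbol calculus.
\item Here $r$ enters only through the weight $(2^kL)^{-2r}$ and the geometric series. So uniformity for $r$ in compact subsets of $(d/2,\infty)$ is immediate, whereas the paper must track $r$ through its symbol derivative bounds.
\end{itemize}

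What the paper's route buys:
\begin{itemize}
\item It keeps the whole lemma within one toolkit, kernel localization plus packing, shared with the sup-norm part.
\item It gives pointwise off-diagonal decay of $\Psi_{L,r}$, which is more information than this estimate needs.
\end{itemize}
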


\begin{proof}
 Let $h\in C^\infty([0,\infty))$ satisfy $h=1$ on $[0,1]$ and $h=0$ on
 $[2,\infty)$, and define the filtered kernel
 \begin{equation*}
  \Phi_L(x,z):=
  \sum_{\ell=0}^\infty h(\ell/L)
  \sum_{k=1}^{Z(d,\ell)}Y_{\ell,k}(x)Y_{\ell,k}(z).
 \end{equation*}
 We define
 \begin{equation*}
  g_L(z):=\sum_{x\in\boldsymbol{X}_N}\Phi_L(x,z)-N.
 \end{equation*}
 Then $g_L$ is a spherical polynomial of degree at most $2L$.
 To match the shifted multiplier $(\ell+\gamma_d)/L$ in
 \cite[Theorem~3.5]{NarcowichPetrushevWard2006}, we denote by
 $\gamma_d:=(d-1)/2$ and, for $L\geq1$, define the even function
 \begin{equation*}
  \kappa_L(u):=
  \begin{cases}
   h(|u|-\gamma_d/L),& |u|\geq\gamma_d/L,\\
   1,& |u|<\gamma_d/L.
  \end{cases}
 \end{equation*}
 Since $h$ is identically one near the origin, the family
 $\{\kappa_L\}_{L\geq1}$ is smooth with uniformly bounded supports and
 derivatives, and
 $\kappa_L({(\ell+\gamma_d)}/{L})=h(\ell/L)$ for
 $\ell\geq0$.
 The cited localization theorem therefore gives, uniformly in $L$ and for
 every $M>d$,
 \begin{equation}\label{eq:filtered-kernel-localization}
  |\Phi_L(x,z)|\lesssim_M
  L^d(1+L\operatorname{dist}(x,z))^{-M}.
 \end{equation}
 Since $L^d\geq N$, condition \eqref{eq:block-separation} implies
 $q(\boldsymbol{A}_\nu)\geq c/L$.  Fix $z\in\Sd$ and divide
 $\boldsymbol{A}_\nu$ into sets $E_0:=\boldsymbol{A}_\nu\cap B(z,2L^{-1})$
 and $E_j:=\left\{x\in\boldsymbol{A}_\nu:
  2^jL^{-1}\leq\operatorname{dist}(x,z)<2^{j+1}L^{-1}\right\}$
 for $j\geq1$.
 Since $q(\boldsymbol{A}_\nu)\geq c/L$, the balls
 $B(x,c/(2L))$, $x\in\boldsymbol{A}_\nu$, are pairwise disjoint.  For
 $x\in E_0$, such a ball is contained in
 $B(z,(2+c/2)L^{-1})$; for $x\in E_j$, $j\geq1$, it is contained in
 $B(z,C_c2^jL^{-1})$.  The volume growth of spherical caps and disjointness
 therefore give $|E_0|L^{-d}\lesssim L^{-d}$
 and $|E_j|L^{-d}\lesssim (2^jL^{-1})^d$
 for $j\geq1$.  Hence
 \begin{equation}\label{eq:annular-packing-count}
  |E_0|\lesssim1,
  \qquad |E_j|\lesssim 2^{jd},\quad j\geq1.
 \end{equation}
 For $x\in E_0$, \eqref{eq:filtered-kernel-localization} gives
 $|\Phi_L(x,z)|\lesssim L^d$.  If $x\in E_j$ with $j\geq1$, then
 $L\operatorname{dist}(x,z)\geq2^j$, and hence
 $|\Phi_L(x,z)|\lesssim L^d2^{-jM}$.  Combining these bounds with the
 estimates for $|E_j|$ gives
 \begin{equation*}
  \sum_{x\in\boldsymbol{A}_\nu}|\Phi_L(x,z)|
  \lesssim L^d\left(1+\sum_{j\geq1}2^{jd}2^{-jM}\right)
  \lesssim L^d,
 \end{equation*}
 uniformly in $z$ and $\nu$, where the last series converges because $M>d$.
 Summing over the blocks and using $N\leq L^d\leq KL^d$ proves
 \eqref{eq:filtered-block-supnorm}.  Since $h(0)=1$, the constant harmonic
 in $\Phi_L$ gives $\int_{\Sd}\Phi_L(x,z)\,\dd\sigma_d(z)=1$, further yielding
 \begin{equation*}
  \int_{\Sd}g_L\,\dd\sigma_d
  =\sum_{x\in\boldsymbol{X}_N}
    \int_{\Sd}\Phi_L(x,z)\,\dd\sigma_d(z)-N
  =N-N=0.
 \end{equation*}

 For the second estimate,  we  define
 \begin{equation*}
  g_{L,\nu}:=
  \sum_{x\in\boldsymbol{A}_\nu}\Phi_L(x,\cdot)
  -|\boldsymbol{A}_\nu|.
 \end{equation*}
 Both $\mathcal D_{\boldsymbol{A}_\nu}$ and $g_{L,\nu}$ have mean zero. For
 $\ell\geq1$, we have
 \begin{equation*}
  \left\langle
   \mathcal D_{\boldsymbol{A}_\nu}-g_{L,\nu}\sigma_d,
   Y_{\ell,k}
  \right\rangle
  =(1-h(\ell/L))
   \sum_{x\in\boldsymbol{A}_\nu}Y_{\ell,k}(x).
 \end{equation*}
 Substituting these coefficients into
 \eqref{eq:negative-sobolev-norm} gives
 \begin{align*}
\norm{\mathcal D_{\boldsymbol{A}_\nu}
        -g_{L,\nu}\sigma_d}_{H^{-r}(\Sd)}^2 
  &=
  \sum_{\ell=1}^{\infty}(1+\lambda_\ell)^{-r}
  (1-h(\ell/L))^2
  \sum_{k=1}^{Z(d,\ell)}
  \left|\sum_{x\in\boldsymbol{A}_\nu}Y_{\ell,k}(x)\right|^2\\
  &=
  \sum_{x,y\in\boldsymbol{A}_\nu}
  \sum_{\ell=1}^{\infty}(1+\lambda_\ell)^{-r}
  (1-h(\ell/L))^2
  \sum_{k=1}^{Z(d,\ell)}Y_{\ell,k}(x)Y_{\ell,k}(y),
   \end{align*}
where we introduce the tail kernel
 \begin{equation*}
  \Psi_{L,r}(x,y):=
  \sum_{\ell=1}^\infty
  (1-h(\ell/L))^2(1+\lambda_\ell)^{-r}
  \sum_{k=1}^{Z(d,\ell)}Y_{\ell,k}(x)Y_{\ell,k}(y).
 \end{equation*}
 Choose a smooth partition of unity $\{\chi_j\}_{j\geq0}$ on
 $[L,\infty)$ subordinate to the dyadic annuli of scale $R_j:=2^jL$, with
 $\operatorname{supp}\chi_j\subset[c_1R_j,c_2R_j]$ 
 and $|\chi_j^{(q)}(t)|\leq C_qR_j^{-q}$ for $q\geq0$,
 with constants independent of $j$ and $L$.  Define the $j$th band
 multiplier on $[0,\infty)$ by
 \begin{equation*}
  a_{j,L,r}(t):=
  \chi_j(t)(1-h(t/L))^2
  (1+t(t+d-1))^{-r}.
 \end{equation*}
 On its support, $t\asymp R_j$.  The factor $1-h(t/L)$ is constant outside
 finitely many initial bands, and $R_j/L$ is bounded on those bands.
 The product rule therefore gives, uniformly in $j$ and $L$,
 \begin{equation}\label{eq:tail-band-symbol-bounds}
  |a_{j,L,r}^{(q)}(t)|\lesssim_{q,r}R_j^{-2r-q},
  \qquad q\geq0.
 \end{equation}
 After extending $a_{j,L,r}$ by zero to the negative half-line, let
 \[\kappa_{j,L,r}(u):=R_j^{2r}a_{j,L,r}(R_j|u|-\gamma_d).\] The derivative bounds \eqref{eq:tail-band-symbol-bounds} show that these
 even multipliers have
 uniformly bounded supports and derivatives, and
 \begin{equation*}
  a_{j,L,r}(\ell)
  =R_j^{-2r}\kappa_{j,L,r}
   \left(\frac{\ell+\gamma_d}{R_j}\right).
 \end{equation*}
 The localization theorem \cite[Theorem~3.5]{NarcowichPetrushevWard2006}
 contributes the kernel scale $R_j^d$, while the prefactor above contributes
 $R_j^{-2r}$.  If $\Psi_{j,L,r}$ denotes the corresponding band kernel,
 then
 $|\Psi_{j,L,r}(x,y)|\lesssim
  R_j^{d-2r}
  (1+R_j\operatorname{dist}(x,y))^{-M}$.
 Summing the bands gives, for every $M>d$,
 \begin{equation}\label{eq:tail-kernel-localization}
  |\Psi_{L,r}(x,y)|
  \lesssim
  \sum_{j=0}^\infty
  R_j^{d-2r}
  (1+R_j\operatorname{dist}(x,y))^{-M}.
 \end{equation}
 Since $q(\boldsymbol{A}_\nu)\geq c/L\geq c/R_j$, the annular packing
 argument leading to \eqref{eq:annular-packing-count}, now applied at scale
 $R_j^{-1}$, gives uniformly in
 $x\in\boldsymbol{A}_\nu$,
 \begin{equation*}
  \sum_{y\in\boldsymbol{A}_\nu}
  (1+R_j\operatorname{dist}(x,y))^{-M}\lesssim1.
 \end{equation*}
 Consequently, since $R_j=2^jL$ and $r>d/2$, so that $d-2r<0$,
 \begin{equation*}
  \norm{\mathcal D_{\boldsymbol{A}_\nu}-g_{L,\nu}\sigma_d}_{H^{-r}(\Sd)}^2
  \lesssim |\boldsymbol{A}_\nu|
  \sum_{j=0}^\infty R_j^{d-2r} =|\boldsymbol{A}_\nu|L^{d-2r}
  \sum_{j=0}^\infty 2^{j(d-2r)}
  \lesssim_{d,r}|\boldsymbol{A}_\nu|L^{d-2r}.
 \end{equation*}
 Finally, since
 \begin{equation*}
  \mathcal D_{\boldsymbol{X}_N}-g_L\sigma_d
  =\sum_{\nu=1}^K
   (\mathcal D_{\boldsymbol{A}_\nu}-g_{L,\nu}\sigma_d),
 \end{equation*}
 the triangle inequality yields
 \eqref{eq:filtered-block-tail}.
\end{proof}

\begin{theorem}[Completion controlled by block complexity]
\label{thm:block-complexity-completion}
 Fix $d/2<r_0\leq S$ with $S\geq d$.  Suppose that the $N$-point set $\boldsymbol{X}_N$ admits
 a $c$-separated $K$-block decomposition
$\boldsymbol{X}_N=\mathop{\dot\bigcup}_{\nu=1}^K
 \boldsymbol{A}_\nu$, and let
 $H:=\sum_{\nu=1}^K|\boldsymbol{A}_\nu|^{1/2}$.  Then $\boldsymbol{X}_N$ is contained in
 an $N'$-point set $\boldsymbol{X}_{N'}$ such that, simultaneously for every
 $r\in[r_0,S]$,
 \begin{equation*}
  \sup_{\norm{f}_{H^r(\Sd)}\leq1}
  \abs{
   \frac1{N'}\sum_{x\in \boldsymbol{X}_{N'}}f(x)-\int_{\Sd}f\,\dd\sigma_d
  }
  \lesssim (N')^{-r/d},
 \end{equation*}
 and
 \begin{equation}\label{eq:block-completion-cardinality}
 N'\lesssim H^2K^{2S/d-1}.
 \end{equation}
 Moreover, there is a constant $c_*=c_*(d)>0$ such that
 $\boldsymbol{X}_{N'}$ admits a $\min\{c,c_*\}$-separated
 $(K+1)$-block decomposition.  In particular, the conclusion can be
 iterated with a fixed separation constant.
\end{theorem}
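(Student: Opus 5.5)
The plan is to cancel the low-frequency discrepancy of $\boldsymbol{X}_N$ by adding one new block, sampled from a density designed for that purpose. Fix a frequency parameter $L$ with $L^d\geq N$, to be chosen later. Let $g_L$ be the mean-zero polynomial from Lemma~\ref{lem:filtered-separated-blocks}. Let $M$ be the number of new points, with $M\geq C_0KL^d$ and $C_0$ so large that \eqref{eq:filtered-block-supnorm} gives $\norm{g_L}_{L^\infty}\leq M/2$. Define
\begin{equation*}
 w:=1-\frac{g_L}{M}.
\end{equation*}
Then $\int w\,\dd\sigma_d=1$ and $\tfrac12\leq w\leq\tfrac32$, so Lemma~\ref{lem:comparable-measure-block} applies (for $M$ large) and yields a $c_dM^{-1/d}$-separated set $\boldsymbol{B}_M$. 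This set satisfies \eqref{eq:comparable-measure-qmc} simultaneously for all $r\in[r_0,S]$, with constants independent of $w$. We set $\boldsymbol{X}_{N'}:=\boldsymbol{X}_N\cup\boldsymbol{B}_M$ and $N'=N+M$.

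The key identity is
\begin{equation*}
 \mathcal D_{\boldsymbol{X}_{N'}}
 =\bigl(\mathcal D_{\boldsymbol{X}_N}-g_L\sigma_d\bigr)
 +\Bigl(\sum_{x\in\boldsymbol{B}_M}\delta_x-Mw\,\sigma_d\Bigr).
\end{equation*}
It holds because $Mw-M=-g_L$. Taking $H^{-r}$ norms, \eqref{eq:filtered-block-tail} bounds the first term by $HL^{d/2-r}$. The second term equals $M\mathcal L_M$, so \eqref{eq:comparable-measure-qmc} bounds it by $M^{1-r/d}$. Since $M\gtrsim KL^d\geq N$, we have $N'\asymp M$. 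It therefore suffices to ensure
\begin{equation*}
 HL^{d/2-r}\lesssim M^{1-r/d}
\end{equation*}
uniformly in $r\in[r_0,S]$. Take $M\asymp KL^d$. The condition becomes $H\lesssim K^{1-r/d}L^{d/2}$, i.e. $L^d\gtrsim H^2K^{2r/d-2}$. Because $K\geq1$, the worst case is $r=S$. Observe that $H^2\geq\sum_\nu|\boldsymbol{A}_\nu|=N$ and that $K^{2S/d-2}\geq1$ because $S\geq d$. Hence the single choice $L^d\asymp H^2K^{2S/d-2}$ also gives $L^d\geq N$. It then yields $N'\asymp M\asymp KL^d\asymp H^2K^{2S/d-1}$, which is \eqref{eq:block-completion-cardinality}. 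Dividing by $N'$ gives the rate $(N')^{-r/d}$ for all $r\in[r_0,S]$ at once.

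For the block structure, we keep the blocks $\boldsymbol{A}_\nu$ and add $\boldsymbol{B}_M$ as the $(K+1)$st block. The old blocks have separation $cN^{-1/d}\geq c(N')^{-1/d}$. The new block has separation $c_dM^{-1/d}\geq c_*(N')^{-1/d}$, where $c_*$ depends only on $d$ because $M\leq N'$. This gives a $\min\{c,c_*\}$-separated $(K+1)$-block decomposition.

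The remaining technical point, and the one I expect to be the main obstacle, is disjointness: $\boldsymbol{B}_M$ must avoid the prescribed finite set $\boldsymbol{X}_N$. Rotating as in Theorem~\ref{thm:subcritical-block} is not directly available, because $w$ is not rotation invariant. Instead I would perturb the finitely many coincident points of $\boldsymbol{B}_M$ by a tiny amount. The error functional depends continuously on the nodes, uniformly for $r$ in the compact range, since the reproducing kernels of $H^r(\Sd)$ are uniformly continuous there. So a sufficiently small perturbation changes $\norm{M\mathcal L_M}_{H^{-r}(\Sd)}$ by at most $M^{1-r/d}$, which preserves the bound up to a factor $2$. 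It also changes the separation of $\boldsymbol{B}_M$ by at most a factor $2$, which is absorbed into $c_*$.
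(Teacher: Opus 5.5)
Your proposal is correct and follows the paper's proof essentially step for step. You make the same choices $L^d\asymp H^2K^{2S/d-2}$ and $m\asymp KL^d$, and use the same density $1-g_L/m$ in Lemma~\ref{lem:comparable-measure-block}, the same splitting $\mathcal D_{\boldsymbol{X}_{N'}}=(\mathcal D_{\boldsymbol{X}_N}-g_L\sigma_d)+(\mathcal D_{\boldsymbol{B}_m}+g_L\sigma_d)$ with the worst case $r=S$ in the balance, and the same small-perturbation argument for disjointness, which only halves the separation constant of the new block.
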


\begin{proof}
 Let
  \begin{equation}\label{eq:block-choice-L-m}
  L:=\left\lceil
   \bigl(H^2K^{2S/d-2}\bigr)^{1/d}
  \right\rceil,
  \qquad
  m:=\left\lceil A_{d,c}KL^d\right\rceil,
 \end{equation}
 where $A_{d,c}$ will be fixed below. Since $H^2\geq N$,
 $S\geq d$, and $K\geq1$, we have
 $N\leq L^d\asymp H^2K^{2S/d-2}$ and
 $m\asymp H^2K^{2S/d-1}$. In particular, \(m\gtrsim N\).
 Let $g_L$ be supplied by Lemma~\ref{lem:filtered-separated-blocks}, and set
 \[w_L:=1-\frac{g_L}{m}.\]  Let $C_{d,c}$ be the implicit constant in
 \eqref{eq:filtered-block-supnorm}, so that
 $\|g_L\|_{L^\infty(\Sd)}\leq C_{d,c}KL^d$, and let $m_0$ be the uniform
 threshold in Lemma~\ref{lem:comparable-measure-block}.  Choose
 $A_{d,c}\geq\max\{2C_{d,c},m_0\}$.  Then $m\geq m_0$ and
 \begin{equation*}
  \left\|\frac{g_L}{m}\right\|_{L^\infty(\Sd)}
  \leq\frac{C_{d,c}KL^d}{A_{d,c}KL^d}\leq\frac12.
 \end{equation*}
 Since $g_L$ has mean zero, it follows that
  $\int_{\Sd}w_L\,\dd\sigma_d=1$ and 
  $1/2\leq w_L\leq3/2$.
 By the dual characterization of $H^{-r}(\Sd)$, together with $mw_L\sigma_d=m\sigma_d-g_L\sigma_d$,
 Lemma~\ref{lem:comparable-measure-block} gives an $m$-point set
 $\boldsymbol{B}_m$ such that for any $r\in[r_0,S]$,
 \begin{equation}\label{eq:block-new-residual}
  \norm{\mathcal D_{\boldsymbol{B}_m}+g_L\sigma_d}_{H^{-r}(\Sd)}=
  \left\|
   \sum_{x\in\boldsymbol{B}_m}\delta_x-mw_L\sigma_d
  \right\|_{H^{-r}(\Sd)}
  \lesssim m^{1-r/d},
 \end{equation}
 and
 \begin{equation*}
  q(\boldsymbol{B}_m)\geq b_dm^{-1/d}
 \end{equation*}
 for a constant $b_d>0$ independent of $N$, $K$, and $c$.

We next compare the high-frequency remainder of the old discrepancy with
 the error scale of the new block.  For $r\leq S$, the
 choices in \eqref{eq:block-choice-L-m} give
 \begin{equation}\label{eq:block-tail-balance}
  H L^{d/2-r}
  \asymp (KL^d)^{1-r/d}K^{(r-S)/d}
  \asymp m^{1-r/d}K^{(r-S)/d}
  \leq m^{1-r/d}.
 \end{equation}
 
 We may assume that $\boldsymbol{B}_m$ is disjoint from
 $\boldsymbol{X}_N$.  Indeed, otherwise perturb the coincident nodes within
 pairwise disjoint neighborhoods, with each displacement smaller than
 $b_dm^{-1/d}/4$.  The perturbed block $\widetilde{\boldsymbol{B}}_m$ then
 avoids $\boldsymbol{X}_N$ and has separation at least
 $(b_d/2)m^{-1/d}$.  Since $r_0>d/2$, the map
 $x\mapsto\delta_x$ is continuous from $\Sd$ into $H^{-r_0}(\Sd)$, so the
 perturbations can be chosen such that, for every $r\in[r_0,S]$,
 \begin{equation*}
  \norm{\mathcal D_{\widetilde{\boldsymbol{B}}_m}
        -\mathcal D_{\boldsymbol{B}_m}}_{H^{-r}(\Sd)}
  \lesssim
  \norm{\mathcal D_{\widetilde{\boldsymbol{B}}_m}
        -\mathcal D_{\boldsymbol{B}_m}}_{H^{-r_0}(\Sd)}
  \leq m^{1-S/d}\leq m^{1-r/d}.
 \end{equation*}
 Thus the perturbation is absorbed into \eqref{eq:block-new-residual}, and
 we relabel $\widetilde{\boldsymbol{B}}_m$ as $\boldsymbol{B}_m$.  For
 $\boldsymbol{X}_{N'}:=\boldsymbol{X}_N\cup\boldsymbol{B}_m$
 and $N'=N+m\asymp m$, we have
 \begin{equation*}
  \mathcal D_{\boldsymbol{X}_{N'}}
  =(\mathcal D_{\boldsymbol{X}_N}-g_L\sigma_d)
   +(\mathcal D_{\boldsymbol{B}_m}+g_L\sigma_d).
 \end{equation*}
 Together with estimates \eqref{eq:filtered-block-tail},
 \eqref{eq:block-new-residual}, and \eqref{eq:block-tail-balance}, we have $\norm{\mathcal D_{\boldsymbol{X}_{N'}}}_{H^{-r}(\Sd)}
  \lesssim m^{1-r/d}$ for $r\in[r_0,S]$.
 Since $N'\asymp m$, division by $N'$ yields the asserted QMC error
 \begin{equation*}
  \sup_{\norm{f}_{H^r(\Sd)}\leq1}
  \abs{
   \frac1{N'}\sum_{x\in\boldsymbol{X}_{N'}}f(x)
   -\int_{\Sd}f\,\dd\sigma_d
  }
  =\frac1{N'}
   \norm{\mathcal D_{\boldsymbol{X}_{N'}}}_{H^{-r}(\Sd)}
  \lesssim (N')^{-r/d}.
 \end{equation*}
Since $m\gtrsim N$, we have
\[
 N'=N+m\asymp m\asymp H^2K^{2S/d-1}.
\]
This proves \eqref{eq:block-completion-cardinality}.

 Finally, let $\boldsymbol{A}'_\nu:=\boldsymbol{A}_\nu$, $1\leq\nu\leq K$, and
  $\boldsymbol{A}'_{K+1}:=\boldsymbol{B}_m$.
 Since $\boldsymbol{B}_m$ is disjoint from $\boldsymbol{X}_N$, these sets
 form the disjoint decomposition
 \begin{equation*}
  \boldsymbol{X}_{N'}
  =\mathop{\dot\bigcup}_{\nu=1}^{K+1}\boldsymbol{A}'_\nu.
 \end{equation*}
 Moreover, $N'\geq N$ and $N'\geq m$, so the old and new blocks satisfy
 \begin{equation*}
  q(\boldsymbol{A}_\nu)\geq cN^{-1/d}\geq c(N')^{-1/d},
  \qquad
  q(\boldsymbol{B}_m)\geq\frac{b_d}{2}m^{-1/d}
  \geq\frac{b_d}{2}(N')^{-1/d}.
 \end{equation*}
 Hence $\boldsymbol{X}_{N'}$ admits a
 $\min\{c,b_d/2\}$-separated $(K+1)$-block decomposition.  Taking
 $c_*:=b_d/2$ proves the final assertion and keeps the separation constant
 fixed under iteration.
\end{proof}

\begin{remark}[Application in the subcritical range]
\label{rem:sec3subcritical}
For $d/2<s<d$, taking $r_0=s$ and $S=d$ and using the
 block-size induction below gives $N_{j+1}\lesssim(j+1)N_j$.  This is weaker
 than Theorem~\ref{thm:subcritical-block}, which allows
 $N_{j+1}/N_j\to\rho$ for any $\rho>1$; completion is therefore needed only
 for $s\geq d$.
\end{remark}

\subsection{Nested QMC designs}

We now iterate Theorem~\ref{thm:block-complexity-completion}.  Its
preservation of the separated-block decomposition controls the size of each
successive completion.

\begin{theorem}[Nested QMC designs for $s\geq d$]
\label{thm:critical-supercritical}
Fix $s\geq d$.  There is a nested sequence
 $\boldsymbol{X}_{N_0}\subset\boldsymbol{X}_{N_1}\subset\cdots$ whose
 successive cardinalities satisfy
 \begin{equation}\label{eq:critical-supercritical-growth}
  N_{j+1}\lesssim (j+1)^{2s/d-1}N_j.
 \end{equation}
 Every equal-weight rule in this sequence satisfies, for $j\geq0$,
 \begin{equation}\label{eq:critical-supercritical-nested-error}
  \abs{
   \frac1{N_j}\sum_{x\in\boldsymbol{X}_{N_j}}f(x)
   -\int_{\Sd}f\,\dd\sigma_d
  }
  \lesssim N_j^{-s/d}\norm{f}_{H^s(\Sd)},
  \qquad f\in H^s(\Sd).
 \end{equation}
 Moreover, every prescribed $N$-point set $\boldsymbol{X}_N$ is contained
 in an $N'$-point set $\boldsymbol{X}_{N'}$ satisfying
 \begin{equation*}
  N'\lesssim N^{1+2s/d}
 \end{equation*}
 and
 \begin{equation*}
  \abs{
   \frac1{N'}\sum_{x\in\boldsymbol{X}_{N'}}f(x)
   -\int_{\Sd}f\,\dd\sigma_d
  }
  \lesssim (N')^{-s/d}\norm{f}_{H^s(\Sd)},
  \qquad f\in H^s(\Sd).
 \end{equation*}
\end{theorem}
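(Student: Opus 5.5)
We obtain both assertions by applying Theorem~\ref{thm:block-complexity-completion} with $r_0=S=s$; this is admissible because $s\geq d>d/2$.

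\emph{The completion statement.} Let $\boldsymbol{X}_N$ be an arbitrary prescribed $N$-point set. Use the trivial decomposition into $K=N$ singleton blocks. Singletons have infinite separation, so this is $c$-separated for any $c$; take, say, $c=1$. Then $H=\sum_\nu|\boldsymbol{A}_\nu|^{1/2}=N$, and \eqref{eq:block-completion-cardinality} gives
\[
N'\lesssim H^2K^{2s/d-1}=N^2N^{2s/d-1}=N^{1+2s/d}.
\]
The error bound is exactly the conclusion of the theorem at $r=s$.

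\emph{The nested sequence.} Start with $\boldsymbol{X}_{N_0}$ equal to any QMC-design block that is $c_*$-separated, for instance the $m$-point set from Lemma~\ref{lem:comparable-measure-block} with $w\equiv1$. It is a $c_0$-separated $1$-block decomposition, and its error is already $\lesssim N_0^{-s/d}$. Inductively, suppose $\boldsymbol{X}_{N_j}$ has a $c$-separated $(j+1)$-block decomposition with $c=\min\{c_0,c_*\}$, which is fixed for all $j$. Apply Theorem~\ref{thm:block-complexity-completion} with $K=j+1$. It produces $\boldsymbol{X}_{N_{j+1}}\supset\boldsymbol{X}_{N_j}$ with the optimal error and a $c$-separated $(j+2)$-block decomposition. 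The implicit constants depend only on $c,s,d$, hence are uniform in $j$; this uniformity is the reason the final assertion of that theorem was proved.

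\emph{The growth bound.} We have $N_{j+1}\lesssim H_j^2K^{2s/d-1}$. Cauchy--Schwarz gives $H_j^2\leq K\sum_\nu|\boldsymbol{A}_\nu|=KN_j$, which yields only $N_{j+1}\lesssim (j+1)^{2s/d}N_j$. This is one power of $(j+1)$ worse than \eqref{eq:critical-supercritical-growth}, and removing it is the main obstacle. The proof of Theorem~\ref{thm:block-complexity-completion} shows that the new block has size $m\asymp N_{j+1}$, so the block sizes grow rapidly. We therefore aim to show by induction that $H_j\lesssim N_j^{1/2}$ with a uniform constant. If the block sizes $n_0,\dots,n_j$ satisfy $n_{i}\geq 2n_{i-1}$, then
\[
\sum_{i\le j}n_i^{1/2}\leq n_j^{1/2}\sum_{k\ge0}2^{-k/2}\lesssim n_j^{1/2}\leq N_j^{1/2}.
\]
Such doubling holds because $m\gtrsim H^2K^{2s/d-1}\geq N_j$. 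To get a ratio of at least $2$ rather than merely comparable sizes, enlarge the constant $A_{d,c}$ in \eqref{eq:block-choice-L-m} if necessary, so that $m\geq 2N_j\geq 2n_j$; this does not affect the other estimates. With $H_j^2\lesssim N_j$ we obtain
\[
N_{j+1}\lesssim N_j(j+1)^{2s/d-1},
\]
which is \eqref{eq:critical-supercritical-growth}. The error estimate \eqref{eq:critical-supercritical-nested-error} holds at every stage by the theorem, with a $j$-independent constant.
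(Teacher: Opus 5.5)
Your proposal is correct and follows the paper's proof. The one-step completion uses the singleton decomposition with $K=H=N$, and the nested sequence iterates Theorem~\ref{thm:block-complexity-completion} with $r_0=S=s$, $K=j+1$ and a fixed separation constant while maintaining $H_j^2\lesssim N_j$.

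The only difference is how you maintain that last bound. The paper propagates it via $H_{j+1}^2\le 2H_j^2+2m_j\lesssim N_{j+1}$, which is uniform in $j$ because $m_j\ge A_{d,c}H_j^2$. You instead sum a geometric series over block sizes that at least double. Your enlargement of $A_{d,c}$ is harmless but not needed, since $m_j\ge A_{d,c}N_j$ already gives $N_{j+1}\ge(1+A_{d,c})N_j$.
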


\begin{proof}
We start with a finite separated set \(\boldsymbol{X}_{N_0}\), regarded as a
single block.  It admits a \(c\)-separated \(1\)-block decomposition, and its
error at this fixed initial level is finite and may be absorbed into the
implicit constant in \eqref{eq:critical-supercritical-nested-error}.
Suppose inductively that $\boldsymbol{X}_{N_j}$ has a uniformly separated
decomposition $\mathop{\dot\bigcup}_{\nu=1}^{j+1}\boldsymbol{A}_{j,\nu}$,
and put $H_j:=\sum_{\nu=1}^{j+1}|\boldsymbol{A}_{j,\nu}|^{1/2}$.  We also
maintain $H_j^2\lesssim N_j$, which holds for the initial one-block
decomposition.  The completion in Theorem~\ref{thm:block-complexity-completion},
with $r_0=S=s$ and $K=j+1$, adds a block of size
$m_j\asymp H_j^2(j+1)^{2s/d-1}$.
Since $2s/d-1\geq1$ and $H_j^2\geq N_j$, we have
$N_{j+1}=N_j+m_j\asymp m_j$.  Moreover,
\[
 H_{j+1}^2=(H_j+m_j^{1/2})^2
 \leq2H_j^2+2m_j\lesssim N_{j+1},
\]
so the induction continues and
$N_{j+1}\lesssim N_j(j+1)^{2s/d-1}$.  The same theorem gives
$\mathcal E_s(\boldsymbol{X}_{N_{j+1}})\lesssim N_{j+1}^{-s/d}$, proving
\eqref{eq:critical-supercritical-growth} and \eqref{eq:critical-supercritical-nested-error}.

For the one-step statement, decompose an arbitrary prescribed set
\(\boldsymbol{X}_N\) into its \(N\) singleton blocks.  Applying
Theorem~\ref{thm:block-complexity-completion} with $r_0=S=s$, $K=N$, and
$H=N$ gives $N'\lesssim H^2K^{2s/d-1}=N^{1+2s/d}$. Together with $\mathcal E_s(\boldsymbol{X}_{N'})
 \lesssim (N')^{-s/d}$, it proves the one-step completion assertion.
\end{proof}

\section{Concluding remarks}
\label{sec:concluding-remarks}

We have shown that, for every fixed $s>d/2$, the optimal Sobolev cubature
rate is compatible with equal-weight nesting of cubature points on $\Sd$.  The construction
changes at the critical index $s=d$.  Below this index, geometrically growing
QMC blocks can be combined directly.  At and above it, the discrepancy of
the inherited nodes must instead be compensated by the newly added block.
The resulting completion argument provides nested QMC designs throughout the
full range $s>d/2$.

Our results establish an optimal-rate QMC counterpart to the nested
spherical $t$-designs studied by Zheng and Zhuang in 
\cite{ZhengZhuang2024Nested}.  Starting from an optimal-order spherical
$t_1$-design with $t_1<t$, their general construction produces a spherical
$t$-design with a total of $P=O(t^{2d+1})$ points.  Polynomial exactness and
\eqref{eq:designs-are-qmc} then give, for every fixed $s>d/2$,
$\mathcal E_s(\boldsymbol{X}_P)\lesssim t^{-s}\lesssim
 P^{-s/(2d+1)}$.
Thus their currently proved general cardinality bound yields the Sobolev
exponent $s/(2d+1)$ when expressed in terms of the total number of completed nodes.  For $s\ge d$, Theorem~\ref{thm:critical-supercritical} replaces polynomial exactness by the QMC condition and attains the optimal exponent $s/d$:
$\mathcal E_s(\boldsymbol{X}_{N'})\lesssim (N')^{-s/d}$.
The bounds $P=O(t^{2d+1})$ in \cite{ZhengZhuang2024Nested} and $N'\lesssim N^{1+2s/d}$ in the present work
 describe different parameters. The former is expressed in the target degree $t$, while the
latter depends on the inherited cardinality $N$ and the Sobolev index $s$.
The comparison therefore concerns the integration error rate, rather than a direct comparison of the two completion sizes.
Moreover, polynomial exactness is stronger than the QMC condition, and
$P=O(t^{2d+1})$ is an upper bound for the known construction.  When
$t=mt_1$ for a fixed rational number $m>1$, Zheng and Zhuang obtain the exact
optimal order $O(t^d)$ for suitably chosen inherited designs, with constants
depending on $m$. The uniform $O(t^d)$ exact-completion problem remains open.
Our result shows that, when the objective is optimal Sobolev integration,
arbitrary-node completion can retain the optimal QMC exponent.
A natural next question is the cost of refinement.  Below
$s=d$, our construction allows $N_{j+1}/N_j\to\rho$ for every $\rho>1$.
For $s\geq d$, it gives
$N_{j+1}\lesssim(j+1)^{2s/d-1}N_j$ and the one-step bound
$N'\lesssim N^{1+2s/d}$.  Whether these can be reduced to thin refinement remains open.

The setting of non-equal weights, which is also natural in spherical
quadrature and MZ discretization
\cite{filbir2011marcinkiewicz,Mhaskar2006Weighted}, offers a further direction.
Sloan and Womersley recently considered non-equal-weight QMC designs
in \cite{SloanWomersley2026}.  If arbitrary positive
weights are allowed, a trivial construction of nested QMC designs is immediate: the inherited nodes
may simply be assigned a vanishing total mass.  A substantive weighted theory
should therefore incorporate balanced weights and quantitative inheritance
between successive levels.  The probabilistic results of Zheng and Zhuang \cite{ZhengZhuang2026} concern exact quadrature on growing uniformly sampled point sets, with weights recomputed at successive levels.  Combining these ideas with
stable weight inheritance may lead to weighted nested QMC rules with smaller
completion costs.

{\small
\bibliographystyle{siamplain}
\bibliography{myref}
}

\end{document}